\newtheorem{lemma}{Lemma}[section]
\newtheorem{corollary}{Corollary}[section]
\newtheorem{proposition}{Proposition}[section]
\newtheorem{example}{Example}[section]
\numberwithin{equation}{section}
\def\R{{\mathbb{R}}}
\def\N{{\mathbb{N}}}
\def\Z{{\mathbb{Z}}}
\def\T{{\mathbb{T}}}
\begin{document}

\title{An elementary and direct computation of cohomology with and without a group action}
\author{Makiko Sasada \\ Graduate School of Mathematical Sciences, \\ The University of Tokyo}
\address{
Makiko Sasada\\
Graduate School of Mathematical Sciences, The University of Tokyo\\
3-8-1, Komaba, Meguro-ku, Tokyo, 153-8914, Japan.}
\email{\tt sasada@ms.u-tokyo.ac.jp}


\maketitle

\begin{abstract}
Recently, we introduced a configuration space with interaction structure and a uniform local cohomology on it with co-authors in \cite{BKS}. The notion is used to 
understand a common structure of infinite product spaces appeared in the proof of Varadhan's non-gradient method. For this, the cohomology of the configuration space with 
a group action is the main target to study, but the cohomology is easily obtained from that of the configuration space without a group action by applying 
a well-known property on the group cohomology. In fact, the analysis of the cohomology of the configuration space without a group action is the essential part of \cite{BKS}. In this article, we give an 
elementary and direct proof to obtain the cohomology of a space with a group action from that without a group action under a certain condition including the setting of 
the configuration space with interaction structure. In particular, no knowledge of group cohomology is required.
\end{abstract}



\maketitle

\section{Group action on linear spaces, homomorphism and kernel}

Suppose $U$ and $W$ are $\R$-linear spaces and $\pi :U \to W$ is an $\R$-linear map. We also assume that a group $G$ acting on $U$ and $W$, namely, for each $g \in G$,
\[
g: U \to U, \quad g : W \to W
\]
are automorphisms and $(gh)(u)=g(h(u))$ for any $g,h \in G$ and $u \in U$ or $W$. Moreover, we assume 
\[
g \circ \pi = \pi \circ g
\]
for all $g \in G$. Let $U^G$ and $W^G$ are the linear subspaces of $U$ and $W$ which are invariant under the action of $G$: 
\[
U^G:=\{u \in U \ ; \ g(u)=u, \forall g \in G\}, \quad W^G:=\{w \in W \ ; \ g(w)=w, \forall g \in G\}.
\]
We also denote by $\pi(U)^G := \pi(U) \cap W^G$, which is the intersection of the image of $U$ and the invariant set under the group action. Since $g \circ \pi = \pi \circ g$, it is obvious that $\pi(U^G) \subset \pi(U)^G$.

\begin{proposition}\label{prop:decomp}
Assume that $m:= \dim \ker \pi \in \N$ and $G$ is generated by a finite subset $\{g_1,\dots, g_d\}$. Then, 
\[
\dim \pi(U)^G/ \pi(U^G) \le md.
\]
Moreover, $\dim \pi(U)^G/ \pi(U^G) = md$ if and only if the following two conditions both hold : (i) $\ker \pi \subset  U^G$ and (ii) $(\ker \pi)^d \subset \bar{g} (U)$ where $\bar{g} : U \to U^d$ is the linear map $(\bar{g}u)_i =g_i u -u=(g_i-\mathrm{id})u$. 
\end{proposition}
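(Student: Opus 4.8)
The plan is to reduce the entire problem to a dimension count inside the finite-dimensional space $(\ker\pi)^d$, exploiting equivariance. First I would introduce the auxiliary subspace
\[
\tilde U := \{u\in U : (g_i - \mathrm{id})u\in\ker\pi \text{ for all } i\} = \bar g^{-1}\big((\ker\pi)^d\big),
\]
and show $\pi(U)^G = \pi(\tilde U)$. Indeed, writing $w=\pi(u)$, equivariance gives $g_i w - w = \pi((g_i-\mathrm{id})u)$, so (since $G$ is generated by $g_1,\dots,g_d$) $w\in W^G$ holds iff $(g_i-\mathrm{id})u\in\ker\pi$ for all $i$, i.e. iff $u\in\tilde U$; this identifies $\pi(U)^G$ with $\pi(\tilde U)$. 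Note also $\ker\bar g=U^G$ and $U^G\subset\tilde U$.

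Next I would apply the first isomorphism theorem to $\pi|_{\tilde U}$, whose image is $\pi(U)^G$. A short computation gives $\pi^{-1}(\pi(U^G))\cap\tilde U = U^G + (\tilde U\cap\ker\pi)$, hence
\[
\pi(U)^G/\pi(U^G)\cong \tilde U/\big(U^G + (\tilde U\cap\ker\pi)\big).
\]
The crucial observation — the step that makes the numerology come out exactly — is that $\ker\pi\subset\tilde U$: for $k\in\ker\pi$, equivariance yields $\pi((g_i-\mathrm{id})k) = (g_i-\mathrm{id})\pi(k) = 0$, so $(g_i-\mathrm{id})k\in\ker\pi$. Thus $\tilde U\cap\ker\pi = \ker\pi$ and the quotient simplifies to $\tilde U/(U^G + \ker\pi)$.

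Then I would push everything through $\bar g$. Since $\ker\bar g = U^G\subset\tilde U$ and $\bar g(\tilde U) = \bar g(U)\cap(\ker\pi)^d$, the map $\bar g$ induces an isomorphism
\[
\pi(U)^G/\pi(U^G)\cong \big(\bar g(U)\cap(\ker\pi)^d\big)/\bar g(\ker\pi).
\]
The bound is now immediate, because the numerator sits inside $(\ker\pi)^d$, which has dimension $md$. For the equality statement I would use $\dim\bar g(\ker\pi) = m - \dim(\ker\pi\cap U^G)$ to write the dimension as
\[
\dim\big(\bar g(U)\cap(\ker\pi)^d\big) - \big(m - \dim(\ker\pi\cap U^G)\big),
\]
and observe that this reaches $md$ iff both terms are extremal: $\dim(\bar g(U)\cap(\ker\pi)^d) = md$, which is exactly condition (ii) $(\ker\pi)^d\subset\bar g(U)$, and $\dim(\ker\pi\cap U^G) = m$, which (as $\dim\ker\pi = m$) is exactly condition (i) $\ker\pi\subset U^G$.

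I expect the main obstacle to be the careful bookkeeping of the several nested subspaces rather than any deep difficulty — in particular verifying $\pi(U)^G = \pi(\tilde U)$ and the preimage identity $\pi^{-1}(\pi(U^G))\cap\tilde U = U^G + \ker\pi$. The conceptual heart is the single equivariance remark $\ker\pi\subset\tilde U$, after which both the bound $\le md$ and its sharp equality case are forced by elementary linear algebra inside $(\ker\pi)^d$.
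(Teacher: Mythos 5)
Your proof is correct, and it takes a genuinely different route from the paper's, though both pivot on the same finite-dimensional space $\bar{g}(U)\cap(\ker\pi)^d$. The paper argues with an explicit basis: it picks $\mathbf{u}^{(1)},\dots,\mathbf{u}^{(\ell)}$ spanning $\bar{g}(U)\cap(\ker\pi)^d$, pulls them back through $\bar{g}$ to elements $u^{(k)}$, shows the classes of $\pi(u^{(k)})$ span $\pi(U)^G$ modulo $\pi(U^G)$ (giving $\le\ell\le md$), and then treats the equality case by a separate two-condition analysis, with $\ell=md$ yielding (ii) and linear independence of the classes shown equivalent to (i) via a contrapositive construction of a witness $u^*\in\ker\pi\setminus U^G$. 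You instead work basis-free: after the (correct) verifications that $\pi(U)^G=\pi(\tilde U)$ for $\tilde U=\bar{g}^{-1}\bigl((\ker\pi)^d\bigr)$, that $\ker\bar{g}=U^G$ (fixedness under the generators propagates to all of $G$ since the stabilizer of a vector is a subgroup), and the equivariance observation $\ker\pi\subset\tilde U$, two applications of the first isomorphism theorem give the exact identification
\[
\pi(U)^G/\pi(U^G)\;\cong\;\bigl(\bar{g}(U)\cap(\ker\pi)^d\bigr)/\bar{g}(\ker\pi),
\]
hence the exact formula $\dim \pi(U)^G/\pi(U^G)=\dim\bigl(\bar{g}(U)\cap(\ker\pi)^d\bigr)-m+\dim(\ker\pi\cap U^G)$, from which both the bound and the equality characterization fall out at once, since the right-hand side equals $md$ precisely when both summands are extremal, i.e., when (ii) and (i) hold. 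All the intermediate identities you flag as bookkeeping do check out, including the preimage identity $\pi^{-1}(\pi(U^G))\cap\tilde U=U^G+\ker\pi$ and $\bar{g}(\tilde U)=\bar{g}(U)\cap(\ker\pi)^d$; incidentally, your $\tilde U$ coincides with the $\tilde U=\{u\in U:\pi(u)\in W^G\}$ that the paper itself introduces later in Section 1.1. What your route buys is a strictly stronger conclusion (an exact dimension formula rather than only the extremal characterization) and no case analysis; what the paper's route buys is the explicit representatives $u^{(k)}$ and the decomposition $w=\pi(\sum_k a_k u^{(k)}+\bar{u})$ with uniqueness of the coefficients under (i), which the paper reuses verbatim in the proof of Corollary \ref{cor:decomp} — your isomorphism yields the same data, but only after pulling a basis back through it, a step you would need to make explicit to support that corollary.
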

\begin{proof}
Since $\dim \ker \pi =m$, $\ell:=\dim(\bar{g}(U) \cap  (\ker \pi)^d ) \le md$. Let $\mathbf{u}^{(1)},\dots, \mathbf{u}^{(\ell)}$ be a basis of the finite dimensional linear space $\bar{g}(U) \cap  (\ker \pi)^d$. Then, by definition, for each $k \in \{1,2,\dots, \ell\}$, there exist $u^{(k)} \in U$ such that $\bar{g} u^{(k)}=\mathbf{u}^{(k)}$ or equivalently $\mathbf{u}^{(k)}_i=g_i u^{(k)} - u^{(k)} $ for $i=1,\dots,d$. 
Note that since $\mathbf{u}^{(k)} \neq \mathbf{0}$, $u^{(k)} \notin U^G$ and since $\{ \mathbf{u}^{(1)},\dots, \mathbf{u}^{(\ell)} \}$ is linearly independent, $\{ u^{(1)},\dots,u^{(\ell)} \}$ is also linearly independent. 
Now, suppose $w \in \pi(U)^G$. Then, $w=\pi(u)$ for some $u \in U$ and $w \in W^G$. Hence 
\[
\pi( (g_i-\mathrm{id})u)=\pi(g_i u)-\pi(u)=g_i(\pi(u))-\pi(u)=g_i(w)-w=0,
\] 
and so $(g_i-\mathrm{id})u \in \ker \pi$ for $i=1,\dots,d$. Hence, there exists $a_1,\dots,a_{\ell}$ such that $\bar{g} u=\sum_{k=1}^{\ell}a_k\mathbf{u}^{(k)}$. Let $\bar{u}:=u-\sum_{k=1}^{\ell}a_ku^{(k)}$. Then, 
\[
(g_i-\mathrm{id})\bar{u}=(g_i-\mathrm{id})u-\sum_{k=1}^{\ell}a_k\mathbf{u}^{(k)}_i=(\bar{g}u)_i - (\sum_{k=1}^{\ell}a_k\mathbf{u}^{(k)})_i =0
\]
for $i=1,\dots,d$ and so $\bar{u} \in U^G$. Therefore, we have
\[
w=\pi(u)=\pi(\sum_{k=1}^{\ell}a_ku^{(k)} + \bar{u})=\sum_{k=1}^{\ell}a_k \pi(u^{(k)})+\pi(\bar{u})
\]
where $\pi(\bar{u}) \in \pi(U^G)$ and conclude that 
\[
\dim \pi(U)^G/ \pi(U^G) \le \ell \le dm.
\]
From the above argument, $\dim \pi(U)^G/ \pi(U^G) =dm$ if and only if the following two conditions both hold : 
\begin{enumerate}
\item $\ell=dm$, 
\item $\sum_{k=1}^{\ell} a_k\pi(u^{(k)}) \in \pi(U^G)$ implies $a_1=a_2=\dots=a_{\ell}=0$. 
\end{enumerate} 
It is simple to see that the condition (1) is equivalent to $\bar{g} (U) \cap  (\ker \pi)^d = (\ker \pi)^d$ and also to $(\ker \pi)^d \subset \bar{g} (U)$.
Next, we prove that the condition (2) is equivalent to $\ker \pi \subset U^G$. For this, first note that $\sum_{k=1}^{\ell} a_k\pi(u^{(k)}) \in \pi(U^G)$ is equivalent to the existence of $\bar{u} \in U^G$ such that
\[
\sum_{k=1}^{\ell} a_k u^{(k)} -\bar{u} \in \ker \pi.
\]
Hence, if $\ker \pi \subset U^G$, then $\sum_{k=1}^{\ell} a_k u^{(k)} \in U^G$ and so applying $\bar{g}$ to $\sum_{k=1}^{\ell} a_k u^{(k)}$, we obtain $a_1=a_2=\dots=a_{\ell}=0$. On the other hand, if $\ker \pi \nsubset U^G$,
then there exists $u^* \in \ker \pi$ such that $\bar{g} u^* \neq 0$. Since $\bar{g}u^* \in \bar{g}(U) \cap (\ker \pi)^d$, there exists $a_1,a_2,\dots ,a_{\ell}$ such that $\bar{g}u^*=\sum_{k=1}^{\ell}a_k \mathbf{u}^{(k)}$ satisfying 
$a_k \neq 0$ for some $k$. This implies $u^* - \sum_{k=1}^{\ell} a_k u^{(k)} \in U^G=\ker \bar{g}$ and so $\pi(u^*) -  \sum_{k=1}^{\ell} a_k \pi (u^{(k)} ) \in \pi(U^G)$. Since $\pi(u^*)=0$, this means $\sum_{k=1}^{\ell} a_k \pi (u^{(k)} ) \in \pi(U^G)$ and so the condition (2) does not hold.
\end{proof}

From this proposition, the following useful corollary directly follows.

\begin{corollary}\label{cor:decomp}
Suppose that $\{u_1,\dots,u_m\}$ is a basis of $\ker \pi$ and $G=<g_1,g_2,\dots,g_d>$. Moreover, assume that\\
(i) $\ker \pi \subset U^G$. \\
(ii) There exist $u_{j,k} \in U$ such that $(g_i-\mathrm{id})u_{j,k}=\delta_{i,j}u_k$ for $i,j=1,\dots,d$ and $k=1,\dots,m$.\\
Then, for any $w \in \pi(U)^G$, there exists $u \in U^G$ and a sequence of real numbers $(a_{j,k})_{j=1,\dots,d, k=1\dots,m}$ which does not depend on the choice of $u_{j,k}$ but depends on the basis $\{u_1,\dots,u_m\}$ and 
the generating set $\{g_1,g_2,\dots, g_d \}$ such that
\[
w=\pi(\sum_{j=1}^d\sum_{k=1}^m a_{j,k}u_{j,k} +u).
\]
\end{corollary}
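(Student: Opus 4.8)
The plan is to deduce the corollary directly from Proposition \ref{prop:decomp}, first checking that hypotheses (i)--(ii) here imply conditions (i)--(ii) there, and then reading off the explicit coefficients from that proof. The crucial observation is that assumption (ii) exhibits explicit $\bar{g}$-preimages of a natural basis of $(\ker\pi)^d$. Indeed, since $\{u_1,\dots,u_m\}$ is a basis of $\ker\pi$, the vectors $\mathbf{e}_{j,k}\in U^d$ having $u_k$ in the $j$-th coordinate and $\mathbf{0}$ elsewhere (for $j=1,\dots,d$, $k=1,\dots,m$) form a basis of $(\ker\pi)^d$; by (ii) we have $\bar{g}\,u_{j,k}=\mathbf{e}_{j,k}$, so $(\ker\pi)^d\subset\bar{g}(U)$. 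This is precisely condition (ii) of Proposition \ref{prop:decomp}, while (i) coincides with condition (i) there, so we are in the equality case $\ell=md$, and we may take the basis $\{\mathbf{u}^{(k)}\}$ of $\bar{g}(U)\cap(\ker\pi)^d$ appearing in that proof to be exactly the family $\{\mathbf{e}_{j,k}\}$, with corresponding preimages $u^{(k)}=u_{j,k}$.

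Next I would fix $w\in\pi(U)^G$ and repeat the construction from the proof of the Proposition with this adapted basis. Choosing any $u\in U$ with $\pi(u)=w$, the invariance of $w$ gives $(g_i-\mathrm{id})u\in\ker\pi$ for each $i$, so we may expand
\[
(g_j-\mathrm{id})u=\sum_{k=1}^m a_{j,k}u_k,\qquad j=1,\dots,d,
\]
in the chosen basis of $\ker\pi$; this defines the array $(a_{j,k})$. Setting $\bar{u}:=u-\sum_{j,k}a_{j,k}u_{j,k}$ and using the defining relation $(g_i-\mathrm{id})u_{j,k}=\delta_{i,j}u_k$ from (ii), a short computation gives $(g_i-\mathrm{id})\bar{u}=0$ for all $i$, hence $\bar{u}\in U^G$ since the $g_i$ generate $G$. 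Then $w=\pi(u)=\pi(\sum_{j,k}a_{j,k}u_{j,k}+\bar{u})$ is the asserted decomposition, with $u:=\bar{u}$.

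The part that requires the most care --- and where assumption (i) does the real work --- is the claimed well-definedness of $(a_{j,k})$. First, the coefficients must not depend on the lift $u$ of $w$: two lifts differ by an element of $\ker\pi$, which by (i) lies in $U^G$ and is therefore annihilated by every $g_i-\mathrm{id}$, so the expansions of $(g_j-\mathrm{id})u$ are unchanged. Second, $(a_{j,k})$ is defined by the displayed relation without any reference to the elements $u_{j,k}$, so it cannot depend on their choice; and since the verification that $\bar{u}\in U^G$ used only property (ii), any admissible choice of $u_{j,k}$ yields the same $(a_{j,k})$ together with a (possibly different) invariant remainder. Thus $(a_{j,k})$ depends solely on $w$, the basis $\{u_1,\dots,u_m\}$, and the generators $\{g_1,\dots,g_d\}$, as claimed.
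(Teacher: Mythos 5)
Your proof is correct, and while the existence part follows the paper's construction essentially verbatim (you verify, exactly as the paper does, that $\bar{g}u_{j,k}$ gives a basis of $(\ker\pi)^d$ contained in $\bar{g}(U)$, and then run the proposition's decomposition with this adapted basis), your treatment of the well-definedness of $(a_{j,k})$ takes a genuinely different and cleaner route. The paper proves independence of the choice of $u_{j,k}$ by comparing two decompositions: it first replaces $\tilde{u}_{j,k}$ by $u_{j,k}$ using $u_{j,k}-\tilde{u}_{j,k}\in U^G$, and then feeds the resulting relation $\sum_{j,k}(a_{j,k}-\tilde{a}_{j,k})\pi(u_{j,k})\in\pi(U^G)$ into condition (2) of Proposition \ref{prop:decomp}, whose equivalence with $\ker\pi\subset U^G$ was established there by a contrapositive argument. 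You instead characterize the coefficients intrinsically: $a_{j,k}$ are the coordinates of $(g_j-\mathrm{id})u$ in the basis $\{u_k\}$ of $\ker\pi$ for an arbitrary lift $u$ of $w$, with hypothesis (i) guaranteeing independence of the lift. This buys an explicit formula for $(a_{j,k})$ in terms of $w$, the basis, and the generators alone --- something the paper's proof leaves implicit --- and it avoids rerunning the proposition's equivalence argument. One small point you should spell out rather than gesture at: to conclude that \emph{any} valid decomposition $w=\pi(\sum_{j,k}\tilde{a}_{j,k}\tilde{u}_{j,k}+\tilde{u})$ with $\tilde{u}\in U^G$ carries the intrinsic coefficients, apply $g_i-\mathrm{id}$ to the lift $u'=\sum_{j,k}\tilde{a}_{j,k}\tilde{u}_{j,k}+\tilde{u}$; by (ii) and the invariance of $\tilde{u}$ this yields $(g_i-\mathrm{id})u'=\sum_k\tilde{a}_{i,k}u_k$, so lift-independence forces $\tilde{a}_{i,k}=a_{i,k}$. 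With that one line added, your argument fully recovers (indeed slightly strengthens) the paper's uniqueness claim.
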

\begin{proof}
The second condition clearly implies that $\mathbf{u}^{(j,k)}:=\bar{g} u_{j,k}  \in U^d$, $j=1,\dots, d$, $k=1,\dots,m$ is a basis of $(\ker \pi)^d$ and $(\ker \pi)^d \subset \bar{g} (U)$. Hence, from the last proposition and its proof, the existence of $u$ and $(a_{j,k})_{j=1,\dots,d, k=1\dots,m}$ follows. Next, we prove that $a_{j,k}$ is independent from the choice of $u_{j,k}$. Suppose that $\{\tilde{u}_{j,k}\}$ also satisfies
\[
(g_i-\mathrm{id})\tilde{u}_{j,k}=\delta_{ij}u_k,
\]
and $w=\pi(\sum_{j=1}^d\sum_{k=1}^m a_{j,k}u_{j,k} +u)= \pi(\sum_{j=1}^d\sum_{k=1}^m \tilde{a}_{j,k}\tilde{u}_{j,k} +\tilde{u})$ for some $\tilde{a}_{j,k} \in \R$ and $\tilde{u} \in U^G$. Then, since $(g_i-\mathrm{id})(u_{j,k}-\tilde{u}_{j,k})=0$ for all $i$, $u_{j,k}-\tilde{u}_{j,k} \in U^G$ for all $j,k$. Hence, for some $\tilde{u}' \in U^G$,
\[
w=\pi(\sum_{j=1}^d\sum_{k=1}^m a_{j,k}u_{j,k} +u)= \pi(\sum_{j=1}^d\sum_{k=1}^m \tilde{a}_{j,k}{u}_{j,k} +\tilde{u}').
\]
In particular, $\sum_{j=1}^d\sum_{k=1}^m (a_{j,k}-\tilde{a}_{j,k})\pi(u_{j,k})  \in \pi(U^G)$. Hence, $a_{j,k}=\tilde{a}_{j,k}$ for all $j,k$ by the proof of the last proposition.
\end{proof}

\subsection{Under the additional condition : $\ker \pi \subset U^G$} 

In this subsection, we always assume that $\ker \pi \subset U^G$. Let $\tilde{U} := \{ u \in U ; \pi(u) \in W^G\}$ and $\tilde{W}:=W^G$, and $\tilde{G} := G^{ab}$ where $G^{ab}$ is the abelianization of $G$. 
First, we show the following simple but important lemma. 
\begin{lemma}
Suppose $\ker \pi \subset U^G$. If $[g] =[h] \in G^{ab}$, then $g u =h u$ for any $u \in \tilde{U}$ where $[g], [h] \in G^{ab}$ are the representative of $g, h \in G$ respectively.
\end{lemma}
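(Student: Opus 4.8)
The plan is to attach to each $u \in \tilde{U}$ the map $\phi_u : G \to U$ defined by $\phi_u(g) := (g-\mathrm{id})u = gu - u$, and to prove that, under the standing hypothesis $\ker \pi \subset U^G$, this map is a group homomorphism into the abelian additive group $(\ker \pi, +)$. Granting this, the lemma is immediate: a homomorphism into an abelian group kills every commutator and hence factors through $G^{ab}=G/[G,G]$, so $[g]=[h]$ forces $\phi_u(g)=\phi_u(h)$, which is exactly $gu-u = hu-u$, i.e. $gu=hu$.

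First I would check that $\phi_u$ really lands in $\ker \pi$, which is where the defining property of $\tilde{U}$ is used. For $u \in \tilde{U}$ we have $\pi(u) \in W^G$, so for any $g \in G$,
\[
\pi\big((g-\mathrm{id})u\big) = \pi(gu)-\pi(u) = g(\pi(u))-\pi(u) = 0,
\]
using $g \circ \pi = \pi \circ g$ and then $\pi(u)\in W^G$. Thus $\phi_u(g) \in \ker \pi$ for every $g$, and by the standing assumption $\ker\pi \subset U^G$ we in fact have $\phi_u(g) \in U^G$ for every $g$. This inclusion is the one place where the hypothesis enters, and it is the crux of the whole argument.

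Next I would establish additivity. For any action of a group on a linear space one has the cocycle identity
\[
\phi_u(gh) = (gh)u - u = g(hu-u) + (gu - u) = g\,\phi_u(h) + \phi_u(g),
\]
valid with no assumptions whatsoever. The decisive point is that $\phi_u(h) \in U^G$ by the previous step, so $g\,\phi_u(h) = \phi_u(h)$, and the identity collapses to $\phi_u(gh) = \phi_u(g) + \phi_u(h)$. Hence $\phi_u$ is a homomorphism from $G$ into the abelian group $(\ker \pi, +)$, from which $\phi_u(e)=0$ and $\phi_u(g^{-1}) = -\phi_u(g)$ follow in the usual way.

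Finally I would conclude as above: if $[g]=[h]$ in $G^{ab}$ then $gh^{-1} \in [G,G]$, and since $\phi_u$ vanishes on commutators, $0 = \phi_u(gh^{-1}) = \phi_u(g) - \phi_u(h)$, giving $gu=hu$ for all $u \in \tilde{U}$. I expect the only genuine subtlety to be precisely the collapse of the cocycle identity to honest additivity, which rests entirely on $\phi_u(h) \in U^G$; without $\ker\pi \subset U^G$ the map $\phi_u$ would merely be a $1$-cocycle and would not descend to $G^{ab}$.
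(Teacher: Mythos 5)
Your proposal is correct and is essentially the paper's own argument: the paper reduces the lemma to showing $ghu = hgu$ for all $u \in \tilde{U}$ and obtains this from exactly your key step, namely that $(g-\mathrm{id})u \in \ker\pi \subset U^G$ is fixed by every element of $G$, so $hgu = hu + gu - u = ghu$, which is precisely your collapsed cocycle identity $\phi_u(hg) = \phi_u(h) + \phi_u(g)$. Your packaging via the homomorphism $\phi_u : G \to (\ker\pi, +)$ only makes explicit the descent to $G^{ab}$ (identity, inverses, products of commutators) that the paper leaves implicit after establishing commutativity of the action on $\tilde{U}$.
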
 
\begin{proof}
It is sufficient to show that $g h u = h g u$ for any $g, h \in G$ and $u \in \tilde{U}$. For any $u \in \tilde{U}$, since $\pi(u) \in W^G$, $gu- u \in \ker \pi$ and $hu- u \in \ker \pi$. Then, since $\ker \pi \subset U^G$, 
$h(gu-u)= gu-u$ and $g(hu-u)=hu-u$ and so $h gu=hu+gu-u= ghu$. 
\end{proof}

Applying the above lemma, $\tilde{G}$ acts on $\tilde{U}$ and $\tilde{W}$ as follows :
\[
[g ] u =g u, \quad [g] w =w 
\]
for $u \in \tilde{U}, w \in \tilde{W}$ and $[g] \in G^{ab}$ is the representative of $g \in G$. Moreover, $\pi : \tilde{U} \to \tilde{W}$ also defines a linear map and $[g] \circ \pi = \pi \circ [g]$ holds. Since $\pi(U)^G=\pi(\tilde{U})^G=\pi(\tilde{U})^{\tilde{G}}$ and $\pi(U^G)=
\pi(\tilde{U}^G)= \pi (\tilde{U}^{\tilde{G}})$, we also have $\pi(U)^G/ \pi(U^G) = \pi(\tilde{U})^{\tilde{G}} /\pi (\tilde{U}^{\tilde{G}})$.

\begin{proposition}\label{prop:general}
Assume that $\ker \pi \subset U^G$, $m:= \dim \ker \pi \in \N$ and $d:=\mathrm{rank} \ G^{ab}$. Then, 
\[
\dim \pi(U)^G/ \pi(U^G) \le md.
\]
Moreover, for a basis $\{u_1,u_2,\dots,u_m\}$ of $\ker \pi$ and a generating set $\{g_1,g_2,\dots, g_d\}$ of the free part of $G^{ab}$, if there exist $u_{j,k} \in \tilde{U}$ such that $(g_i-\mathrm{id})u_{j,k}=\delta_{i,j}u_k$ for $i,j=1,\dots,d$ and $k=1,\dots,m$, then $\dim \pi(U)^G/ \pi(U^G) = md$ holds. In addition, for any $w \in \pi(U)^G$, there exists $u \in U^G$ and a sequence of real numbers $(a_{j,k})_{j=1,\dots,d, k=1\dots,m}$ which does not depend on the choice of $u_{j,k}$ but depends on the basis $\{u_1,\dots,u_m\}$ and 
the generating set $\{g_1,g_2,\dots, g_d \}$ such that
\[
w=\pi(\sum_{j=1}^d\sum_{k=1}^m a_{j,k}u_{j,k} +u).
\]
\end{proposition}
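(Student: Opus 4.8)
The plan is to transport the whole problem to the reduced system $(\tilde U, \tilde W, \tilde G)$ set up just above and then apply Proposition~\ref{prop:decomp} and Corollary~\ref{cor:decomp} to it. As already recorded in the text, $\pi(U)^G/\pi(U^G) = \pi(\tilde U)^{\tilde G}/\pi(\tilde U^{\tilde G})$, so it suffices to analyse the right-hand quotient for $\pi|_{\tilde U} : \tilde U \to \tilde W$. Since $\ker\pi \subset U^G \subset \tilde U$, the kernel of $\pi|_{\tilde U}$ is again $\ker\pi$, of dimension $m$, so the kernel dimension is preserved. The only discrepancy between this situation and the hypotheses of Proposition~\ref{prop:decomp} is that $\tilde G = G^{ab}$ need not be generated by $d = \mathrm{rank}\,G^{ab}$ elements when it carries torsion.

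The key step, which I expect to be the main obstacle, is to show that the torsion of $\tilde G$ acts trivially on $\tilde U$, so that the $\tilde G$-action factors through its free part. Let $[t] \in \tilde G$ satisfy $[t]^n = [\mathrm{id}]$ for some $n \ge 1$ and fix $u \in \tilde U$; the $\tilde G$-action on $\tilde U$ is well defined by the preceding Lemma. Because $\pi(u) \in W^G$, the element $v := [t]u - u = tu - u$ lies in $\ker\pi \subset U^G$, whence $[t]v = tv = v$. An induction on $k$, using that $[t]$ is $\R$-linear, then gives $[t]^k u = u + kv$; evaluating at $k = n$ yields $u = [t]^n u = u + nv$, so $nv = 0$ and therefore $v = 0$ since we work over $\R$. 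Hence $[t]u = u$, the action of $\tilde G$ on $\tilde U$ factors through its free part $\Z^d$ generated by $[g_1],\dots,[g_d]$, and in particular $\tilde U^{\tilde G} = \tilde U^{\Z^d} = U^G$, while invariance in $\tilde W = W^G$ is automatic.

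With this reduction the conclusion follows by direct application. Applying Proposition~\ref{prop:decomp} to $\pi|_{\tilde U}$ with the group $\Z^d = \langle g_1,\dots,g_d\rangle$ and $\dim\ker(\pi|_{\tilde U}) = m$ gives $\dim \pi(\tilde U)^{\Z^d}/\pi(\tilde U^{\Z^d}) \le md$, which is the asserted upper bound. For the second part I will verify the hypotheses of Corollary~\ref{cor:decomp} for this system: condition (i) holds because $\ker\pi \subset U^G = \tilde U^{\Z^d}$, and condition (ii) is precisely the assumed existence of $u_{j,k} \in \tilde U$ with $(g_i-\mathrm{id})u_{j,k} = \delta_{i,j}u_k$. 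The corollary then both forces equality $\dim \pi(U)^G/\pi(U^G) = md$, via the equality case of Proposition~\ref{prop:decomp}, and produces for each $w \in \pi(U)^G$ the element $u \in U^G$ together with the coefficients $(a_{j,k})$ having the stated uniqueness and independence-of-choice properties. The remaining work is purely a matter of matching notation, so the entire difficulty is concentrated in the triviality of the torsion action established above.
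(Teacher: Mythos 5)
Your proposal is correct and follows essentially the same route as the paper: reduce to the triple $(\tilde U,\tilde W,\tilde G)$ via the identity $\pi(U)^G/\pi(U^G)=\pi(\tilde U)^{\tilde G}/\pi(\tilde U^{\tilde G})$, show torsion elements of $G^{ab}$ act trivially on $\tilde U$ (your induction $[t]^k u = u + kv$ is exactly the paper's telescoping computation $0=(g^n-\mathrm{id})u=n(gu-u)$), and then apply Proposition~\ref{prop:decomp} and Corollary~\ref{cor:decomp} to the free part acting on $\tilde U$. No gaps; your explicit remarks that $\ker(\pi|_{\tilde U})=\ker\pi$ and that invariance in $\tilde W=W^G$ is automatic are points the paper leaves implicit.
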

\begin{proof}
Since $\pi(U)^G/ \pi(U^G) = \pi(\tilde{U})^{\tilde{G}} /\pi (\tilde{U}^{\tilde{G}})$, we only work on $\tilde{U}, \tilde{W}$ and $\tilde{G}$.  Let $g \in \tilde{G}$ be a torsion and $n$ be its order. Then, for any $u \in \tilde{U}$, since $gu -u \in \ker \pi \subset  U^G$, 
\[
0= (\overbrace{ g \circ g \circ \dots \circ g}^{n}  -\mathrm{id}) u= \sum_{q=0}^{n-1} \overbrace{ g \circ g \circ \dots \circ g}^{q} (g u- u) = n (g  u -u).
\]
Hence, $gu =u$. Then, by letting $G^*$ be the free part of $\tilde{G}$, we have $\pi(\tilde{U})^{\tilde{G}} = \pi(\tilde{U})=\pi(\tilde{U})^{G^*}$ and $\pi (\tilde{U}^{\tilde{G}}) = \pi (\tilde{U}^{G^*})$. Now, we can apply Proposition \ref{prop:decomp} and Corollary 
\ref{cor:decomp} to $\tilde{U}, \tilde{W},\pi$ and $G^*$.
\end{proof}

\section{Application}
In this section, we consider the application of the results in the last section for some concrete settings. 

The first example is one of the most typical settings where the linear space of $U$ and $V$ are differential forms of a manifold. 
\begin{example}
Let $U$ and $W$ be spaces of smooth $0$ and $1$-differential forms on $\R^d$, $U=C^0(\R^d),W=C^1(\R^d)$ and $\pi :U \to W$ be the differential operator 
\[
\pi : U \to W, \quad \pi(f)=\sum_{i=1}^d\frac{\partial f}{\partial x_i}dx_i. 
\]
Then, $\ker\pi$ is the set of constant functions and in particular $\ker \pi \cong \R$. Let $G=\Z^d$ acting on $\R^d$ as
\[
g(x)=x+g \quad x \in \R^d, g \in G.
\]
The action of $G$ to $U$ and $W$ are also naturally induced and satisfies $g \circ \pi= \pi \circ g$ for any $g \in G$. For this setting, it is easy to see that
\[
\ker \pi \subset U^G.
\]
Denote a constant function on $\R^d$ by $\mathbf{1}$ and $(e_i)_{i=1,\dots, d}$ be the normal basis of $G=\Z^d$. Then, 
\[
(e_i-\mathrm{id})f_{j}=\delta_{ij}\mathbf{1}
\]
where $f_j : \R^d \to \R$ is the $j$-th coordinate function $f_j(x)=x_j$. Hence, by Proposition \ref{prop:decomp}, $\dim \pi(U)^G/ \pi(U^G) = d$. Note that
since $H^1(\R^d) \cong \{0\}$, $w \in \pi(U)$ is equivalent to $dw=0$, namely $w \in C^1(\R^d)$ is exact if and only if it is closed. Hence, we have
\[
\pi(U)^G \cong \{ w \in C^1(\T^d) ; dw= 0 \} 
\]
and 
\[
\pi(U^G) \cong \{ w \in C^1(\T^d) ; w= df , f  \in C^0(\T^d)\}. 
\]
In particular,
\[
\pi(U)^G/ \pi(U^G) \cong H^1(\R^d/G) = H^1 (\T^d) \cong \R^d.
\]
From Corollary \ref{cor:decomp}, we have that if $w \in C^1(\R^d)$ is closed (namely $dw=0$) and invariant under the action of $G$, then there exists $a_1,\dots,a_d$ and $f \in C^0(\R^d)$ which is invariant under the actin of $G$ such that
\[
w=\pi(\sum_{i=1}^da_if_i+f).
\]
\end{example}

In the next example, we apply the main result to the discrete geometry. The computation is also important for the application to the configuration space.

\begin{example}\label{exm:graph}
Let $X=(V,E)$ be a symmetric directed graph and
\[
C^0(X):=\{f : V \to \R\}, \quad C^1(X):=\{w: E\to \R; w(e)=-w(\bar{e})\}.
\]
Define $U=C^0(X)$ and $W=C^1(X)$ and let $\pi : U \to W$ be
\[
\pi(f)(e)=f(te)-f(oe).
\]
For this setting $\dim \ker \pi$ is equal to the number of connected components of the graph $X$. Suppose $X$ has $m$ connected components and also a group $G$ acting on $X$, namely for each $g \in G$, there is a graph automorphism of $X$, which we also denote by $g$, and $g( h(v))=(gh)(v)$ for any $g,h \in G$. 
\begin{proposition}\label{prop:graph}
Suppose $G \cong \Z^d$. Then, $\dim \pi(U)^G / \pi(U^G) \le dm$. Moreover, $\dim \pi(U)^G / \pi(U^G) = dm$ holds if and only if the action of $G$ to the graph $X$ is free and closed in each connected component, namely for any $v \in V$ and $g \in G$, $v$ and $g(v)$ are in a same connected component of $X$.
\end{proposition}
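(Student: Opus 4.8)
The plan is to read off both assertions directly from Proposition \ref{prop:decomp}, applied with $U=C^0(X)$, $W=C^1(X)$, the coboundary $\pi$, and the standard generating set $\{g_1,\dots,g_d\}$ of $G\cong\Z^d$ coming from the basis $e_1,\dots,e_d$ (so $G$ is already free abelian and no abelianization step is needed). First I would record that $\ker\pi$ consists exactly of the functions that are constant on each connected component, whence $m=\dim\ker\pi$ is the number of components, a natural number by hypothesis. Since $G$ is generated by $d$ elements, the first part of Proposition \ref{prop:decomp} gives $\dim\pi(U)^G/\pi(U^G)\le dm$ at once. For the equality I would invoke the characterization there: $\dim\pi(U)^G/\pi(U^G)=dm$ holds if and only if both (i) $\ker\pi\subset U^G$ and (ii) $(\ker\pi)^d\subset\bar{g}(U)$ hold. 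The task then reduces to translating (i) into ``closedness'' (every $v$ and $g(v)$ lie in the same component) and (ii) into freeness of the action.

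For condition (i) I would show it is equivalent to closedness. If closedness holds, then any component-constant $f$ satisfies $(g\cdot f)(v)=f(g^{-1}v)=f(v)$ because $g^{-1}v$ lies in the same component as $v$; hence $f\in U^G$ and $\ker\pi\subset U^G$. Conversely, if some $g$ sends a vertex $v$ into a different component, then the indicator $\mathbf{1}_C$ of the component $C\ni v$ lies in $\ker\pi$ but is moved by $g$, so (i) fails. This settles the contrapositive and identifies (i) with closedness.

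The heart of the argument is to show that, \emph{granting closedness}, condition (ii) is equivalent to freeness. For the direction ``free $\Rightarrow$ (ii)'' I would work one preserved component $C$ at a time, use freeness to identify each orbit with $\Z^d$ via $n\mapsto g^{n}\cdot v_0$, and then, given any $(c_1,\dots,c_d)\in(\ker\pi)^d$ with common values $\gamma_i:=c_i|_C$, solve $(g_i-\mathrm{id})u=c_i$ on that orbit by the explicit potential $u(g^{n}\cdot v_0):=-\sum_i\gamma_i n_i$; assembling these over all orbits and components produces $u\in U$ with $\bar{g}u=(c_1,\dots,c_d)$. For the converse I would argue by contraposition: if the action is not free, choose $g=g_1^{n_1}\cdots g_d^{n_d}\ne\mathrm{id}$ fixing some $v$ in a component $C$. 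Were $(g_i-\mathrm{id})u=c_i$ solvable, then since each $c_i$ is a $G$-invariant constant on $C$ and $G$ is abelian, a telescoping computation would collapse $(g-\mathrm{id})u$ to $\sum_i n_i c_i$ on $C$, while $g(v)=v$ forces $(g-\mathrm{id})u$ to vanish at $v$, giving $\sum_i n_i\gamma_i=0$. Choosing the constants $\gamma_i$ with $\sum_i n_i\gamma_i\ne 0$, possible since $(n_1,\dots,n_d)\ne 0$, exhibits an element of $(\ker\pi)^d$ outside $\bar{g}(U)$, so (ii) fails.

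I expect the main obstacle to be precisely this telescoping/holonomy obstruction in the converse: one must carefully use that the $c_i$ are $G$-invariant on $C$ and that $G$ is abelian to reduce $(g-\mathrm{id})u$ to $\sum_i n_i c_i$, and then extract the fixed-point constraint. Once (i) and (ii) are matched with closedness and freeness, combining them with the equivalence of Proposition \ref{prop:decomp} finishes the proof: the equality $\dim\pi(U)^G/\pi(U^G)=dm$ holds exactly when the action is both free and closed in each connected component.
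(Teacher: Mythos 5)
Your proposal is correct and takes essentially the same route as the paper: both reduce the question to conditions (i) and (ii) of Proposition \ref{prop:decomp}, identify (i) with closedness of the action on components, prove that freeness is necessary via the same telescoping computation at a fixed point (using closedness to keep the orbit in one component), and prove sufficiency via the same explicit potential built from orbit representatives, which exists exactly because freeness makes the $\Z^d$-coordinates well defined. The only differences are cosmetic: you solve $(g_i-\mathrm{id})u=c_i$ for a general element of $(\ker\pi)^d$ where the paper solves the basis equations $(g_i-\mathrm{id})f_{j,k}=\delta_{ij}\mathbf{1}_{V_k}$, and the sign in your orbit potential depends on the convention for the induced action on functions, a harmless adjustment since $(\ker\pi)^d$ is a linear space.
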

\begin{proof}
By Proposition \ref{prop:decomp}, we have $\dim \pi(U)^G / \pi(U^G) \le dm$. Next, we study when $\dim \pi(U)^G / \pi(U^G) = dm$ holds. Denote the connected components of $X$ by $\{X_k=(V_k,E_k)\}_{k=1,\dots,m}$, namely $V=\bigsqcup  V_k$, $E=\bigsqcup  E_k$ and for each $k$, $(V_k,E_k)$ is a connected graph. Then, $\{ \mathbf{1}_{V_k}\}_{k=1,\dots,m}$ is a basis of $\ker \pi$. First, suppose $\dim \pi(U)^G / \pi(U^G) = dm$. Then, from Proposition \ref{prop:decomp}, $ \mathbf{1}_{V_k} \in U^G$ for all $k$, namely the action of $G$ is closed in each connected component. Moreover, there exists a set of functions $f_{j,k} \in C^0(V)$ such that 
\[
(g_i-\mathrm{id})f_{j,k}=\delta_{ij}\mathbf{1}_{V_k}
\]
where $\{g_i\}_{i=1,\dots,d}$ is a generator of $G$. We prove that under this condition, the action of $G$ must be free. In fact, if it is not free, then there exists $k$, $v \in V_k$ and $g \in G$ such that $g=\sum_{i=1}^d b_i g_i$ where $b_1 \neq 0$ without loss of generality and $g(v)=v$. Then, 
\[
f_{1,k}(g(v))-f_{1,k}(v)=\sum_{\ell=1}^d \big(f_{1,k}((\sum_{i=1}^{\ell}b_ig_i)(v))-f_{1,k}((\sum_{i=1}^{\ell-1}b_ig_i)(v))\big)
\]
and 
\begin{align*}
& f_{1,k}((\sum_{i=1}^{\ell}b_ig_i)(v))-f_{1,k}((\sum_{i=1}^{\ell-1}b_ig_i)(v))=\sum_{p=0}^{b_{\ell}-1} (g_{\ell}-\mathrm{id})f_{1,k} ((\sum_{i=1}^{\ell-1}b_ig_i + pg_{\ell})(v)) \\
& =\delta_{\ell,1}  \sum_{p=0}^{b_{\ell}-1} \mathbf{1}_{V_k}((\sum_{i=1}^{\ell-1}b_ig_i + pg_{\ell})(v)) =\delta_{\ell,1}b_{\ell}
\end{align*}
where in the last equation we use that $g(v) \in V_k$ for any $g \in G$ since $ \mathbf{1}_{V_k} \in U^G$. So, we conclude that 
\[
f_{1,k}(g(v))-f_{1,k}(v)= b_1 \neq 0
\]
which contradicts with $g(v)=v$. Hence, the action of $G$ must be free if $\dim \pi(U)^G / \pi(U^G) = dm$. \\
Next, we prove the opposite direction. Suppose the action of $G$ is free and $ \mathbf{1}_{V_k} \in U^G$ for all $k$. Denote $V_0:=V/G$ and for each $[v_0] \in V_0$, fix a representative element $v_0 \in V$. We also denote the set of these representative elements by $V_0$ by an abuse of notation where $V_0 \subset V$. Then, for each $v \in V$, there exists unique $v_0 \in V_0$ and $(b_1,b_2,\dots b_d) \in \Z^d$ such that $v=(\sum_{i=1}^db_ig_i)(v_0)$. Note that the uniqueness follows from the freeness of the action. Now, let $f_{j,k}(v):=b_j\mathbf{1}_{V_k}(v)=b_j\mathbf{1}_{V_k}(v_0)$ for $v=(\sum_{i=1}^db_ig_i)(v_0)$. It is easy to see that these functions satisfy
\[
(g_i-\mathrm{id})f_{j,k}=\delta_{ij}\mathbf{1}_{V_k}.
\]
Hence, applying Proposition \ref{prop:decomp}, if the action of $G$ is free and closed in each connected component, then $\dim \pi(U)^G / \pi(U^G) = dm$.
\end{proof}
From this proposition and Corollary \ref{cor:decomp}, we conclude that if the action of $G$ is free and closed in each connected component, then for any $w \in C^1(X)$, if $w$ is closed and invariant under the action of $G$, there exists $\{a_{i,k}\}_{i=1,\dots,d, k =1,\dots,m}$ and $f \in C^0(X)$ which is invariant under the action of $G$ such that
\[
w=\pi(\sum_{i=1}^d\sum_{k=1}^m a_{i,k}f_{i,k}+f)
\]
where $f_{i,k}$ is the function constructed in the proof of Proposition \ref{prop:graph}.

For more general group $G$, we have the following result. 
\begin{proposition}
Suppose the graph $X$ is connected and $d$ be the rank of $G^{ab}$. Then,  $\dim \pi(U)^G / \pi(U^G) \le d$. Moreover, $\dim \pi(U)^G / \pi(U^G) = d$ holds if and only if the action of $G$ to the graph $X$ is free.
\end{proposition}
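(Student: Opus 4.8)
The plan is to reduce to Proposition \ref{prop:general} with $m=1$ and to produce the auxiliary functions it requires by hand, imitating the $\Z^d$ computation of Proposition \ref{prop:graph} but carried out on the free part of $G^{ab}$.

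First I would extract the consequences of connectedness. Since $X$ is connected, $\ker\pi$ is one dimensional, spanned by the constant function $\mathbf{1}_V$, so $m=\dim\ker\pi=1$; and since every $g\in G$ permutes $V$ it fixes $\mathbf{1}_V$, whence $\ker\pi=\R\mathbf{1}_V\subset U^G$. Thus the standing hypothesis of Proposition \ref{prop:general} holds, and that proposition gives at once the bound $\dim\pi(U)^G/\pi(U^G)\le md=d$. The conceptual reason behind the bound, which also guides both implications, is this: for $f\in\tilde U$ one has $(g-\mathrm{id})f\in\ker\pi=\R\mathbf{1}_V$, so $(g-\mathrm{id})f=\chi_f(g)\mathbf{1}_V$ for a scalar $\chi_f(g)$, and (by the lemma preceding Proposition \ref{prop:general}) $g\mapsto\chi_f(g)$ is a homomorphism $G\to(\R,+)$. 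Equality $\dim=d$ is then exactly the surjectivity of $f\mapsto\chi_f$ onto $\mathrm{Hom}(G,\R)\cong\R^d$.

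For the \emph{if} direction I would assume freeness and exhibit the functions demanded by Proposition \ref{prop:general}. Fix generators $g_1,\dots,g_d$ of the free part $G^*$ of $G^{ab}$ and let $\chi_j\colon G\twoheadrightarrow G^{ab}\twoheadrightarrow G^*\cong\Z^d\to\R$ be the $j$-th coordinate. Choosing one representative $v_0$ in each $G$-orbit, freeness makes the expression $v=g\cdot v_0$ unique, so I may define $f_j(v):=\chi_j(g)$. Telescoping as in Proposition \ref{prop:graph} gives $(g_i-\mathrm{id})f_j=\delta_{ij}\mathbf{1}_V$, and since $\chi_j$ is a homomorphism $(g-\mathrm{id})f_j\in\ker\pi$ for every $g$, i.e. $f_j\in\tilde U$. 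With $m=1$ these are precisely the functions $u_{j,1}$ of Proposition \ref{prop:general}, so $\dim=d$ follows.

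The \emph{only if} direction is where the real work lies and is the step I expect to be the main obstacle. Arguing by contraposition in the style of Proposition \ref{prop:graph}, equality provides $f_j\in\tilde U$ with $(g_i-\mathrm{id})f_j=\delta_{ij}\mathbf{1}_V$; if some $g\neq\mathrm{id}$ fixes a vertex $v$, telescoping along a word for the image of $g$ in $G^*\cong\Z^d$ yields $0=f_j(gv)-f_j(v)=b_j$, where $(b_1,\dots,b_d)$ is that image, so $g$ maps to $0$ in $G^*$. The delicate point is that this shows only that a nontrivial stabilizer element lies in the kernel of $G\to G^*$, i.e. is invisible to every homomorphism into $\R$, whereas freeness asks for $g=\mathrm{id}$. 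The crux is therefore to guarantee that non-freeness is always witnessed by a nonzero value of some $\chi_j$; this is immediate when that kernel is trivial, e.g. for $G\cong\Z^d$ where the argument collapses to Proposition \ref{prop:graph}, and it is exactly the interaction between genuine stabilizers and the torsion and commutator part of $G$ that must be handled with care.
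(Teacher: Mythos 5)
Your reduction to Proposition \ref{prop:general} with $m=1$, and your treatment of the \emph{if} direction, are correct and are essentially the paper's own route: the paper likewise gets the bound from Proposition \ref{prop:general} and dismisses the converse construction with ``shown by a similar way,'' meaning exactly your $f_j(v):=\chi_j(g)$ for $v=gv_0$, the rank-$d$ analogue of the functions $f_{j,k}$ built in the proof of Proposition \ref{prop:graph}. Your verification that $f_j\in\tilde U$ (that $(g-\mathrm{id})f_j$ is constant for \emph{every} $g\in G$, because $\chi_j$ is a homomorphism) is a point the paper leaves implicit, and it is right.

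The \emph{only if} direction, which you flag as the main obstacle and leave open, is a genuine gap in your proposal --- but it is equally a gap in the paper, whose entire argument here is ``by the same argument as the last proposition.'' That argument yields precisely what you derived and nothing more: an element of a vertex stabilizer must have trivial image in the free part $G^*$ of $G^{ab}$. It cannot yield freeness, because the statement is false as written. Take $G=\Z\times(\Z/2\Z)$ acting on the line graph ($V=\Z$, with edges between consecutive integers), the $\Z$-factor acting by translation and the $\Z/2\Z$-factor acting trivially. Then $X$ is connected, $d=\mathrm{rank}\, G^{ab}=1$, and $f_1(v)=v$ lies in $\tilde U$ with $(g_1-\mathrm{id})f_1=\mathbf{1}_V$ for the translation generator $g_1$, so Proposition \ref{prop:general} gives $\dim\pi(U)^G/\pi(U^G)=1=d$ (directly: $\pi(U)^G=W^G\cong\R$, $\pi(U^G)=\{0\}$); yet the action is not free. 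A faithful variant: $\Z\times S_3$ acting on the Cartesian product of the line with a $3$-leaf star. The correct dividing line is exactly the condition your analysis isolates: $\dim\pi(U)^G/\pi(U^G)=d$ if and only if every vertex stabilizer is contained in $\ker(G\to G^*)$, i.e.\ is annihilated by every homomorphism $G\to\R$. Indeed, this weaker hypothesis is all that is needed to make your $f_j(v):=\chi_j(g)$ well defined (ambiguity in $g$ is a stabilizer element, on which $\chi_j$ vanishes), while your telescoping computation shows the condition is necessary; for $G\cong\Z^d$ the kernel is trivial and one recovers Proposition \ref{prop:graph}. So what you describe as the crux to be ``handled with care'' is not a missing trick but an error in the proposition as stated, and the two halves of your argument, run under the corrected hypothesis, combine into a complete proof of the corrected statement.
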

\begin{proof}
For this case, $m=1$ and the kernel of $\pi$ is the set of constant functions. Hence $\ker \pi \subset U^G$ holds automatically and so by Proposition \ref{prop:general}, $\dim \pi(U)^G / \pi(U^G) \le d$. Let $\{g_1,g_2,\dots,g_d\}$ be the generating set of the free part of $G^{ab}$. If $\dim \pi(U)^G / \pi(U^G) = d$, then $\dim \pi(\tilde{U})^{G^*} / \pi(\tilde{U}^{G^*}) = d$ and so there exists 
\[
(g_i-\mathrm{id})f_{j}=\delta_{ij}\mathbf{1}_{V}
\]
where $f_j \in \tilde{U}$. Hence, by the same argument as the last proposition, we conclude that the action of $G$ must be free. The opposite direction is also shown by a similar way. 
\end{proof}
\end{example}

Finally, we give an application to the configuration space with interaction structure, though we do not give a precise definition.
\begin{example}
Let $(S^X,\Phi)$ be a configuration space with transition structure associated to a triplet $(X,S,\phi)$ where $X$ is a locale, $S$ is a set of states and $\phi$ is an interaction on $S$ (see Section 2, \cite{BKS}). Define $U=C^0_{\mathrm{unif}}(S^X)$ and $W=Z^1_{\mathrm{unif}}(S^X)$ and let $\pi : U \to W$ be the deferential $\partial$ defined by the transition structure (see Section 3, \cite{BKS}). Then, Theorem 6 of \cite{BKS} implies  
that under the assumption of the theorem, $\pi (U)=W$ and $\dim \ker \pi = \dim \mathrm{Consv}^{\phi}(S)$. Moreover, from Theorem 3.7 of \cite{BKS}, the form of the kernel of $\pi$ is explicitly known, namely 
\[
\ker \pi =\{ \xi_X =\sum_{x \in X} \xi_x  ; \xi \in  \mathrm{Consv}^{\phi}(S) \}. 
\]
Now, we consider the case with a group action and apply our main result to deduce Theorem 5 of \cite{BKS} from Theorem 6 of \cite{BKS}. First, note that for this setting, the action of a group $G$ on $U$ and $V$ are induced from the action of $G$ on the locale, namely the underlying graph $X$. Because of this structure and the explicit form os the kernel of $\pi$, it is easy to see that $\ker \pi \subset U^G$. In fact, $g \xi_ X = \sum_{x \in X} \xi_{gx} = \sum_{x \in X} \xi_{x}$ since $g : X \to X$ is a bijection. Hence, we can apply Proposition \ref{prop:general}. Since the locale $X$ is assumed to be connected, if the action of $G$ on $X$ is free and the rank of $G^{ab}$ is $d$, choosing a generator $\{ g_1,g_2,\dots,g_d\}$, by the last example, there exists $f_j : X \to \R$ such that 
\[
(g_i-\mathrm{id})f_{j}=\delta_{ij}\mathbf{1}_{X}
\]
for $i,j =1,2,\dots, d$. Then, by the direct computation, for each $\xi \in \mathrm{Consv}^{\phi}(S)$, $\tilde{f}_j :=\sum_{x \in X} f_j(x) \xi_x \in U$ satisfies
\begin{align*}
& (g_i-\mathrm{id})\tilde{f}_{j}=\sum_{x \in X} f_j(x) \xi_{ g_i x} - \sum_{x \in X} f_j(x) \xi_{x} = \sum_{x \in X} (f_j(x) -f_j(g_i x)) \xi_{g_ix} \\
& = - \sum_{x \in X} \delta_{ij}\xi_{g_ix} =-\delta_{ij} \xi_X.
\end{align*}
Finally, since $\pi(U)^G =W^G= \mathcal{C}$ and $\pi(U^G)= \mathcal{E}$, we obtain Theorem 5 of \cite{BKS}.
\end{example}

\section*{Acknowledgement}

The author expresses her sincere thanks to Professor Kenichi Bannai for their insightful discussions.

\end{document}